\documentclass{amsart}
\usepackage{amsmath,amsthm,amssymb,microtype,color,verbatim,hyperref,multicol,mathrsfs}
\usepackage{amsmath,amsthm,amssymb,microtype,color,verbatim,hyperref,multicol,tikz}
\usepackage{float}
\restylefloat{table}
\makeatletter
\def\imod#1{\allowbreak\mkern10mu({\operator@font mod}\,\,#1)}
\makeatother

\newtheorem{corollary}{Corollary}
\newtheorem{ex}{Example}
\newtheorem{theorem}{Theorem}
\newtheorem{pro}{Proposition}

\newtheorem{lm}{Lemma}

\title[Factorizations of Matrices With Recursive Entries and Related Topics]{Factorizations of Matrices With Recursive Entries and Related Topics}

\author[X. Y. Chen]{X. Y. Chen}
\address{School of Mathematics and Statistics, Henan University of Technology, China.}
\email{cxy19800222@163.com}

\author[A. R. Moghaddamfar]{A. R. Moghaddamfar}
\address{Faculty of Mathematics, K. N. Toosi University of Technology, Tehran, Iran.}
\email{moghadam@kntu.ac.ir}

\author[K. Moghaddamfar]{K. Moghaddamfar}
\address{Department of Mathematics, Federal University of Cear\'{a}, Fortaleza, Brazil.}
\email{kambiz.moghaddamfar@gmail.com}
\begin{document}
\maketitle
\begin{abstract}
This article examines matrices whose entries are determined by recursive relations of the form $A_{i, j} = x A_{i, j-1} + y A_{i-1, j-1} + z A_{i-1, j}$, where $x, y, z$ are constants, and the initial conditions are defined along the first row and column. We present a general decomposition for such matrices and show that many of the known decompositions are particular cases of this more general decomposition. Additionally, we provide a decomposition of these matrices into Pascal-like matrices and a basic Toeplitz matrix.
\end{abstract}

\footnotetext{{\em $2020$ Mathematics Subject Classification}: 15A15, 15A23, 11C20.\\
{\em Keywords}:  Weighted  recurrence matrix, weighted Toeplitz matrix, group action, matrix factorization, determinant. }

\section{Introduction}
In this article, we focus on infinite matrices $A = [A_{i, j}]_{i, j \geqslant 0}$ whose entries satisfy the following recurrence relation:
\[A_{i, j} = x A_{i, j-1} + yA_{i-1,j-1} + zA_{i-1,j} \quad \text{for } i, j \geq 1;\]
where $x$, $y$, and $z$ are arbitrary constants.

Matrices of this type have been the focus of numerous studies, particularly regarding their factorization into simpler matrices and the evaluation of their determinants (see, for instance, \cite{K2, M, T}). In these studies, authors often present direct decompositions of these matrices using Pascal, Pascal-like, and Toeplitz matrices. Through these decompositions, they investigate the calculation of determinants.

This article aims to clarify these decompositions by utilizing the action of a generalized Pascal matrix on this family of matrices. In most instances, this approach either generalizes previous decompositions or provides an alternative method.

Let $x$, $y$, and $z$ be indeterminates, and let $\alpha=(\alpha_i)_{i\geq 0}$ and $\beta=(\beta_i)_{i\geq 0}$ be two sequences starting with a common first term $\alpha_0=\beta_0=\gamma$.  
Define an infinite matrix $P_{\alpha, \beta}^{[x, y, z]} = [P_{i, j}]_{i, j \geq 0}$ by the following recurrence relation and initial conditions:
\[P_{i, j} = xP_{i, j-1}+yP_{i-1, j-1}+zP_{i-1, j}, \  \text{for} \ i, j\geq 1,\]
and 
$$P_{i, 0}= \alpha_i  \  \text{and} \  P_{0, j}=\beta_j \ \text{for}   \ i, j\geq 0.$$
The matrix $P_{\alpha, \beta}^{[x, y, z]}$ is called the {\em weighted  recurrence matrix} with parameters $x$, $y$, and $z$, associated with $\alpha$ and $\beta$. 

Some specific cases are as follows:
\begin{itemize}
\item[(a)] The weighted recurrence matrix $P_{\alpha, \beta}^{[1, 0, 1]}$ is a {\em generalized Pascal triangle} (see \cite{Bach}). When $\alpha=(1, 1, 1, \ldots)$ and $\beta=(1, 1, 1, \ldots)$, the weighted recurrence matrix $P_{\alpha, \beta}^{[1, 0, 1]}$ is the {\em infinite classical Pascal matrix}.  
\item[(b)] The weighted recurrence matrix $P_{\alpha, \beta}^{[0, 1, 0]}$ is an infinite \emph{Toeplitz matrix} in the usual sense. A more general version of this concept is the \emph{weighted Toeplitz matrix} $P_{\alpha, \beta}^{[0, x, 0]}$ for an indeterminate $x$. It is represented as follows:
\[P_{\alpha, \beta}^{[0, x, 0]}=
\begin{pmatrix}
\gamma & \beta_1 &  \beta_2 &  \beta_3 & \cdots \\
\alpha_1 & \gamma x &  \beta_1 x &  \beta_2 x & \cdots \\
\alpha_2 & \alpha_1 x & \gamma x^2 &  \beta_1 x^2 & \cdots \\
\alpha_3 & \alpha_2 x &  \alpha_1 x^2 &  \gamma x^3 & \cdots \\
\vdots & \vdots & \vdots & \vdots & \ddots
\end{pmatrix}.
\]

\item[(c)] The weighted recurrence matrix $P_{\alpha, \beta}^{[0, 1, 1]}$ is a {\em $7_{\alpha, \beta}$-matrix} (see \cite{JZ}). A more general version of this concept is the \emph{weighted $7_{\alpha, \beta}$-matrix} $P_{\alpha, \beta}^{[0, y, z]}$ for indeterminates $y$ and $z$. Especially, we will concentrate on the weighted $7_{\alpha, \beta}$-matrices $P_{\alpha, \beta}^{[0, y, z]}$ where $\alpha=(z^i)_{i\geq 0}$ and  $\beta=(1, 0, 0, \ldots)$. It is represented as follows:
\[P_{\alpha, \beta}^{[0, y, z]}=
\begin{pmatrix}
1 & 0 & 0 & 0 & \cdots \\
z & y & 0 & 0 & \cdots \\
z^2 & 2yz & y^2 & 0 & \cdots \\
z^3 & 3yz^2 & 3y^2z & y^3& \cdots \\
\vdots & \vdots & \vdots & \vdots & \ddots
\end{pmatrix}=\left( \binom{i}{j} y^jz^{i-j}\right)_{i, j\geq 0}.
\]
\end{itemize}

Unless noted otherwise, we use the following notation in the rest of the paper:
\begin{itemize}
\item For $z\in {\mathbb C}$, $\lambda_z=(z^i)_{i\geq 0}$.
\item $\mu=(1, 0, 0, 0, \ldots)=\lambda_0$ (using the convention $0^0=1$). 
\item The symbol $A^t$ denotes the transpose of a matrix $A$.
\end{itemize}
For the sake of simplicity in notation, when we deal with an infinite matrix $A(\infty)$, we will simply use $A$ to represent the principal $n \times n$ submatrix, rather than using $A(n)$, as long as there is no possibility of confusion.

\section{Group Actions on Weighted  Recurrence Matrices} 
 The following lemma shows that for two complex numbers $v \neq 0$ and $w$, the weighted $7_{\lambda_w, \mu}$-matrices $P_{\lambda_w, \mu}^{[0, v, w]}$ form a group under matrix multiplication.
\begin{lm}\label{lm1} 
The collection $\mathsf {F}$ consists of $n\times n$ weighted  $7_{\lambda_w, \mu}$-matrices $P_{\lambda_w, \mu}^{[0, v, w]}$, where $v$ and $w$ are two complex numbers with $v\neq 0$, 
forms a group under matrix multiplication. Furthermore, if we define $\mathsf {F}^t=\{A^t \ | \ A\in \mathsf {F}\}$, then  $\mathsf {F}^t$ also forms a group under matrix multiplication.
\end{lm}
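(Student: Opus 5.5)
Throughout, write $M(v,w)=P_{\lambda_w,\mu}^{[0,v,w]}$. The approach is to first identify these matrices explicitly and then check the group axioms directly from that formula.

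\textbf{Step 1: closed form.} As displayed in item (c) of the Introduction, I claim
\[
M(v,w)=\left(\binom{i}{j}v^jw^{i-j}\right)_{i,j\geq 0}.
\]
To verify this, I will check the boundary values and the recurrence. For the boundary: at $j=0$ the formula gives $w^i=\alpha_i$, and at $i=0$ it gives $\delta_{0,j}=\beta_j$. For the recurrence $P_{i,j}=vP_{i-1,j-1}+wP_{i-1,j}$, Pascal's rule $\binom{i}{j}=\binom{i-1}{j-1}+\binom{i-1}{j}$ does the job. The matrix is lower triangular, so its principal $n\times n$ submatrix is closed under the operations used below.

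\textbf{Step 2: generating-function interpretation.} Row $i$ of $M(v,w)$ lists the coefficients of $(w+vt)^i$ in powers of $t$. Equivalently, $M(v,w)$ is the matrix of the substitution operator $f(t)\mapsto f(w+vt)$ on polynomials of degree $<n$. Here I use the convention that a row vector of coefficients of $f$ is multiplied by the matrix on the left, or whichever convention makes the composition order come out right.

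\textbf{Step 3: multiplication rule.} Compute the product directly:
\[
\bigl(M(v,w)M(v',w')\bigr)_{i,k}=\sum_j \binom{i}{j}\binom{j}{k}v^jw^{i-j}v'^kw'^{j-k}.
\]
Using $\binom{i}{j}\binom{j}{k}=\binom{i}{k}\binom{i-k}{j-k}$ and summing over $j$ with the binomial theorem, this equals
\[
\binom{i}{k}(vv')^k(w+vw')^{i-k}.
\]
Hence
\[
M(v,w)M(v',w')=M(vv',\,w+vw').
\]
This is the step that carries the real content. The sum manipulation is the only place where care is needed, and it is routine.

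\textbf{Step 4: group axioms.} The multiplication rule gives closure, since $vv'\neq0$ whenever $v,v'\neq 0$. Associativity is inherited from matrix multiplication. The identity is $M(1,0)=I$. For inverses, $M(v,w)^{-1}=M(v^{-1},-w/v)$, which is checked directly: $vv^{-1}=1$ and $w+v(-w/v)=0$. The truncation to $n\times n$ is harmless because the matrices are lower triangular, so the principal $n\times n$ submatrix of a product is the product of the principal submatrices. Note that $\mathsf F$ is isomorphic to the affine group $\{t\mapsto w+vt\}$.

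\textbf{Step 5: the transposed family.} For $\mathsf F^t$, transposition is an anti-isomorphism: $(AB)^t=B^tA^t$ and $(A^{-1})^t=(A^t)^{-1}$. So $A^tB^t=(BA)^t\in\mathsf F^t$, the identity $I^t=I$ lies in $\mathsf F^t$, and inverses stay in $\mathsf F^t$. Therefore $\mathsf F^t$ is a group as well.
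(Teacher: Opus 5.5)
Your proposal is correct and follows the paper's proof essentially step for step. Your multiplication rule $M(v,w)M(v',w')=M(vv',w+vw')$ is exactly the paper's identity $P_{\lambda_w,\mu}^{[0,v,w]}P_{\lambda_z,\mu}^{[0,y,z]}=P_{\lambda_{vz+w},\mu}^{[0,vy,vz+w]}$, and you use the same identity element $M(1,0)$, the same inverse $M(v^{-1},-w/v)$, and the same transposition argument for $\mathsf F^t$. Beyond that, you carry out the binomial computation the paper only calls ``easy,'' and you note explicitly that truncating to $n\times n$ commutes with multiplication because these matrices are lower triangular.
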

\begin{proof} Let $ v, w, y, z\in {\Bbb C}$ with $v, y\neq 0$. Then, an easy computation establishes the identity:
\begin{align}\label{eq-0} P_{\lambda_w, \mu}^{[0, v, w]}  P_{\lambda_z, \mu}^{[0, y, z]}=P_{\lambda_{vz+w}, \mu}^{[0, vy, vz+w]},\end{align}
(see also  (\cite[2.4]{T}). This shows that $\mathsf {F}$ is closed under matrix multiplication. Additionally, we see that $P_{\lambda_{0}, \mu}^{[0, 1, 0]}$ is the identity element,  and 
\[(P_{\lambda_w, \mu}^{[0, v, w]})^{-1}=P_{\lambda_{-wv^{-1}}, \mu}^{[0, v^{-1}, -wv^{-1}]}.\]
Thus, the set $\mathsf {F}$, under the matrix multiplication, forms a group. 

In addition, we have 
\begin{align*} (P_{\lambda_w, \mu}^{[0, v, w]})^t  (P_{\lambda_z, \mu}^{[0, y, z]})^t=(P_{\lambda_z, \mu}^{[0, y, z]}P_{\lambda_w, \mu}^{[0, v, w]} )^t=(P_{\lambda_{yw+z}, \mu}^{[0, yv, yw+z]})^t.\end{align*}
Reasoning as before, it is clear that $\mathsf {F}^t$ forms a group under matrix multiplication. 
\end{proof}

In the sequel, we examine two actions of the group $\mathsf {F}$ on the weighted recurrence matrices. We denote the family of $n \times n$ submatrices of the weighted recurrence matrices $P_{\alpha, \beta}^{[x, y, z]}$ as $\mathrm{WRM}(n)$.

\begin{pro}\label{pro1} The group $\mathsf {F}$ acts on $\mathrm{WRM}(n)$ by left multiplication.
\end{pro}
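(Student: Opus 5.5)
The plan is to prove that left multiplication by an element of $\mathsf{F}$ sends a weighted recurrence matrix to another weighted recurrence matrix, with explicitly computable parameters and initial sequences. The action axioms then follow with no extra work. The identity element $P_{\lambda_0,\mu}^{[0,1,0]}$ acts trivially, and compatibility $L_1(L_2A)=(L_1L_2)A$ is just associativity of matrix multiplication, which stays inside $\mathsf F$ by (\ref{eq-0}). One should also observe that every matrix in $\mathsf F$ is lower triangular. Hence the $n\times n$ principal submatrix of a product of infinite matrices is the product of the $n\times n$ principal submatrices, so it suffices to work with the infinite matrices.

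So fix $L=P_{\lambda_w,\mu}^{[0,v,w]}$, whose entries are $L_{i,k}=\binom{i}{k}v^kw^{i-k}$, and fix $A=P_{\alpha,\beta}^{[x,y,z]}$. Set $B=LA$, so that
\[
B_{i,j}=\sum_{k=0}^{i}\binom{i}{k}v^kw^{i-k}A_{k,j}.
\]
To guess the new parameters I will use the row generating function $\sum_i L_{i,k}s^i=(vs)^k/(1-ws)^{k+1}$. This shows that the bivariate generating function of $B$ is obtained from that of $A$ by the substitution $s\mapsto vs/(1-ws)$ together with a factor $(1-ws)^{-1}$. Applying this substitution to the characteristic denominator $1-xt-zs-yst$ and clearing the factor $1-ws$ gives
\[
1-xt-(vz+w)s-(vy-wx)st .
\]
This suggests that
\[
B=P_{\alpha',\beta'}^{[x,\;vy-wx,\;vz+w]},
\]
where $\beta'_j=B_{0,j}=\beta_j$ and $\alpha'_i=B_{i,0}=\sum_k\binom ik v^kw^{i-k}\alpha_k$. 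Both sequences have first term $\gamma$, as required.

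The core step is a direct index verification. For $i,j\ge1$ I must check that
\[
B_{i,j}-xB_{i,j-1}-(vy-wx)B_{i-1,j-1}-(vz+w)B_{i-1,j}=0 .
\]
I will expand $B_{i,j}$ and $B_{i,j-1}$ using Pascal's rule $\binom ik=\binom{i-1}{k}+\binom{i-1}{k-1}$. In the terms with $k\ge1$, I substitute $A_{k,j}-xA_{k,j-1}=yA_{k-1,j-1}+zA_{k-1,j}$ from the recurrence for $A$. After the index shift $k\mapsto k+1$, these pieces should reproduce exactly $vy\,B_{i-1,j-1}+vz\,B_{i-1,j}$. The remaining $\binom{i-1}{k}$-pieces should produce $w(B_{i-1,j}-xB_{i-1,j-1})$.

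The main obstacle is the bookkeeping of the boundary term $k=0$. The recurrence for $A$ is not available in row $0$, so the $k=0$ contribution $w^i(\beta_j-x\beta_{j-1})$ has to be absorbed by the $\binom{i-1}{0}$ part of the $w$-shifted sum rather than by the recurrence. I expect the split "Pascal piece $\binom{i-1}{k}$ gives the $w$-terms, and piece $\binom{i-1}{k-1}$ gives the $v$-terms" to handle this cleanly, because the $\binom{i-1}{k}$ piece never invokes the recurrence of $A$. I will check this carefully, together with the $k=i$ endpoint, before concluding. Should a mismatch appear, the fallback is to redo the computation at the level of generating functions. There the criterion is that $D'(s,t)H(s,t)$ has no monomials $s^it^j$ with $i,j\ge1$, where $D'$ is the new denominator above and $H$ is the generating function of $B$.
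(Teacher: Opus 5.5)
Your proposal is correct and follows essentially the paper's proof. You establish the same identity, Eq.~(\ref{eq1}), with parameters $[x,\,vy-wx,\,vz+w]$ and new first column $\eta_i=\sum_k\binom{i}{k}v^kw^{i-k}\alpha_k$, by the same index computation based on $L_{i,k}=vL_{i-1,k-1}+wL_{i-1,k}$. Your split works as you expect: the $\binom{i-1}{k}$ piece yields $w(B_{i-1,j}-xB_{i-1,j-1})$, including the $k=0$ boundary term, without ever invoking the recurrence of $A$ in row $0$.

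The differences are cosmetic. The paper works with finite sums inside $n\times n$ matrices rather than passing to infinite ones. It also phrases the action as the right action $P\mapsto L^{-1}P$, whereas you use the left action $P\mapsto LP$; both are valid actions. Your generating-function step is only a heuristic and is not needed for the proof.
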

\begin{proof} Let $P_{\lambda_w, \mu}^{[0, v, w]}\in \mathsf {F}$ and $P_{\alpha, \beta}^{[x, y, z]}\in \mathrm{WRM}(n)$, where $\alpha, \beta$ are two arbitrary sequences with $\alpha_0=\beta_0$. We show first that 
\begin{align}\label{eq1}
P_{\lambda_w, \mu}^{[0, v, w]} \cdot P_{\alpha, \beta}^{[x, y, z]}=P^{[x, vy-wx, vz+w]}_{\eta, \beta},
\end{align}
where $\eta=(\eta_i)_{i\geq 0}$ with 
$$\eta_i= \sum_{k=0}^{i} \binom{i}{k} v^k w^{i-k} \alpha_{k},  \ \text{for} \ i\geq 0.$$
For simplicity, let $A=P_{\lambda_w, \mu}^{[0, v, w]}$, $B=P_{\alpha, \beta}^{[x, y, z]}$ and $C=P^{[x,  vy-wx, vz+w]}_{\eta, \beta}$. To establish Eq. (\ref{eq1}), we observe first that 
$$ (AB)_{i, 0}=\sum_{k=0}^{n-1} A_{i, k} B_{k, 0}=\sum_{k=0}^{i} {i\choose k} v^k w^{i-k} \alpha_k=\eta_i=C_{i, 0},$$
and 
$$(AB)_{0, j}=\sum_{k=0}^{n-1} A_{0, k} B_{k, j}= A_{0,0}B_{0, j}=1\cdot \beta_j= \beta_j=C_{0, j}.
$$
Next, assuming that $1\leq i, j\leq n-1$, we show that 
\begin{align}\label{eq2}
(AB)_{i, j}=x(AB)_{i, j-1}+(vy-wx) (AB)_{i-1, j-1}+(vz+w) (AB)_{i-1, j}.
\end{align}
To prove Eq. (\ref{eq2}), we begin simplifying the right-hand side of the equation.
First, since $A_{i, j}=vA_{i-1, j-1}+wA_{i-1, j}$,  
it follows that 
\begin{align}\label{eq3}
x(AB)_{i, j-1}=xA_{i,0}B_{0,j-1}+\sum_{k=1}^{n-1} x (v A_{i-1, k-1}+wA_{i-1, k})  B_{k, j-1}.  
\end{align}
Similarly, the second term on the right-hand side of Eq. (\ref{eq2}) equals 
\begin{align}\label{eq4}
 (vy-wx) (AB)_{i-1, j-1}=\sum_{k=0}^{n-1}  vy A_{i-1, k} B_{k, j-1}-\sum_{k=0}^{n-1} wx A_{i-1,k} B_{k, j-1}.
\end{align}
Adding Eqs. (\ref{eq3}) and (\ref{eq4}), and noting that  $A_{i, 0}=wA_{i-1,0}\), we conclude that
\begin{align*}
x(AB)_{i, j-1}+(vy-wx) (AB)_{i-1, j-1}= \sum_{k=1}^{n-1} vx A_{i-1,k-1} B_{k, j-1}+\sum_{k=0}^{n-1} vy A_{i-1, k} B_{k, j-1}.
\end{align*}
Now, applying the index shifting $k\rightarrow k+1$ and noting that $A_{i-1,n-1}=0$, we get
\begin{align*}
x(AB)_{i, j-1}+ (vy-wx) (AB)_{i-1, j-1}=\sum_{k=0}^{n-2} vx A_{i-1, k} B_{k+1,j-1}+\sum_{k=0}^{n-2} vy A_{i-1,k} B_{k, j-1},
\end{align*}
and since $B_{k, j}=xB_{k, j-1}+yB_{k-1, j-1}+zB_{k-1, j}$, we obtain
\begin{align}
x(AB)_{i, j-1}+ (vy-wx) (AB)_{i-1, j-1}&=\sum_{k=0}^{n-2}  vA_{i-1,k} (xB_{k+1,j-1}+yB_{k,j-1})\notag \\&=\sum_{k=0}^{n-2} vA_{i-1, k} (B_{k+1,j}-z B_{k, j}). \label{eq5}
\end{align}
Finally, using completely analogous arguments, the third term of the right-hand side of Eq. (\ref{eq2}) is equal to 
\begin{align}\label{eq6}
(vz+\omega)\sum_{k=0}^{n-1}  A_{i-1,k} B_{k, j}=\sum_{k=0}^{n-1} vz A_{i-1,k} B_{k, j}+\sum_{k=0}^{n-1} wA_{i-1,k} B_{k, j}. 
\end{align}
Adding Eqs. (\ref{eq5}) and (\ref{eq6}), the right-hand side of  Eq. (\ref{eq2}) equals 
\begin{align*}
& x(AB)_{i, j-1}+(vy-wx) (AB)_{i-1, j-1}+(vz+w) (AB)_{i-1, j}\\
&\ \ \ \ \ =\sum_{k=0}^{n-2} vA_{i-1,k} (B_{k+1, j}-zB_{k, j})+\sum_{k=0}^{n-1} vz A_{i-1,k} B_{k, j}+\sum_{k=0}^{n-1} wA_{i-1,k} B_{k, j}\\
&\ \ \ \ \ =\sum_{k=0}^{n-2}  vA_{i-1,k} B_{k+1, j}+\sum_{k=0}^{n-1} wA_{i-1,k} B_{k, j}\\
&\ \ \ \ \ =\sum_{k=1}^{n-1} vA_{i-1, k-1} B_{k, j}+\sum_{k=0}^{n-1} wA_{i-1, k} B_{k, j} \ \ \text{(the index shifting $k\rightarrow k-1$)}\\
&\ \ \ \ \ = \sum_{k=1}^{n-1}  \big(vA_{i-1,k-1}+ w A_{i-1, k}\big) B_{k, j}+A_{i-1, 0}A_{0, j}\\
&\ \ \ \ \ = \sum_{k=1}^{n-1}  A_{i, k} B_{k, j}+A_{i, 0}A_{0, j}\\
&\ \ \ \ \ = \sum_{k=0}^{n-1}  A_{i, k} B_{k, j}\\
&\ \ \ \ \ = (AB)_{i, j},
\end{align*}
which corresponds to the left-hand side of  Eq. (\ref{eq2}). This completes the proof of Eq. (\ref{eq1}).

With Eq. (\ref{eq1}) in mind, we can now define an action of $\mathsf {F}$ on $\mathrm{WRM}(n)$ by setting 
\begin{align*}(P_{\alpha, \beta}^{[x, y, z]})^{P_{\lambda_w, \mu}^{[0, v, w]}}&=(P_{\lambda_w, \mu}^{[0, v, w]})^{-1} P_{\alpha, \beta}^{[x, y, z]}\\
&=P_{\lambda_{-wv^{-1}}, \mu}^{[0, v^{-1}, -wv^{-1}]} P_{\alpha, \beta}^{[x, y, z]}\\ &=P^{[x,  v^{-1}y+wv^{-1}x, v^{-1}z-wv^{-1}]}_{\eta, \beta}.\end{align*}
To verify that this really is an action, let $P_{\lambda_w, \mu}^{[0, v, w]}, P_{\lambda_s, \mu}^{[0, r, s]}\in \mathsf {F}$. Then for $P_{\alpha, \beta}^{[x, y, z]}\in \mathrm{WRM}(n)$, we have 
$$(P_{\alpha, \beta}^{[x, y, z]})^{P_{\lambda_{0}, {\mu}}^{[0, 1, 0]}}=(P_{\lambda_{0}, {\mu}}^{[0, 1, 0]} )^{-1}P_{\alpha, \beta}^{[x, y, z]}=P_{\lambda_{0}, {\mu}}^{[0, 1, 0]} P_{\alpha, \beta}^{[x, y, z]}=P^{[x,  y, z]}_{\alpha, \beta},$$ and also 
\begin{align*} (P_{\alpha, \beta}^{[x, y, z]})^{P_{\lambda_w, \mu}^{[0, v, w]}P_{\lambda_s, \mu}^{[0, r, s]}}&=(P_{\lambda_w, \mu}^{[0, v, w]}P_{\lambda_s, \mu}^{[0, r, s]})^{-1} P_{\alpha, \beta}^{[x, y, z]}\\ 
&=(P_{\lambda_s, \mu}^{[0, r, s]})^{-1}  (P_{\lambda_w, \mu}^{[0, v, w]})^{-1}P^{[x,  y, z]}_{\alpha, \beta}\\
&=(P_{\lambda_s, \mu}^{[0, r, s]})^{-1} (P_{\alpha, \beta}^{[x, y, z]})^{P_{\lambda_w, \mu}^{[0, v, w]}}\\
&=\left((P_{\alpha, \beta}^{[x, y, z]})^{P_{\lambda_w, \mu}^{[0, v, w]}}\right)^{P_{\lambda_s, \mu}^{[0, r, s]}}.
\end{align*}
Hence, the proof is complete. \end{proof}

Similarly, an action of $\mathsf {F}^t$ can be defined on $\mathrm{WRM}(n)$ through right multiplication. Therefore, we have the following proposition:

\begin{pro}\label{pro2} The group $\mathsf {F}^t$ acts on $\mathrm{WRM}(n)$ by right multiplication.
\end{pro}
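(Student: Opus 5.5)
The plan is to reduce Proposition \ref{pro2} to Proposition \ref{pro1} by transposition, so that the long index computation in the proof of Eq. (\ref{eq1}) does not have to be redone. The first step is to record the elementary symmetry
\[
\bigl(P_{\alpha, \beta}^{[x, y, z]}\bigr)^t = P_{\beta, \alpha}^{[z, y, x]}.
\]
This holds because transposing swaps the roles of $i$ and $j$. If $Q=B^t$ with $B=P_{\alpha,\beta}^{[x,y,z]}$, then $Q_{i,j}=B_{j,i}=xB_{j,i-1}+yB_{j-1,i-1}+zB_{j-1,i}=xQ_{i-1,j}+yQ_{i-1,j-1}+zQ_{i,j-1}$. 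Moreover $Q$ has first column $\beta$ and first row $\alpha$. In particular $\mathrm{WRM}(n)$ is closed under transposition.

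Next, for $A=P_{\lambda_w, \mu}^{[0, v, w]}\in\mathsf F$ and $B=P_{\alpha,\beta}^{[x,y,z]}\in \mathrm{WRM}(n)$, I would compute $BA^t=(AB^t)^t$. By the symmetry, $AB^t = A\,P_{\beta,\alpha}^{[z,y,x]}$, and Eq. (\ref{eq1}) with $(x,y,z)$ replaced by $(z,y,x)$ gives $P^{[z,\,vy-wz,\,vx+w]}_{\eta',\alpha}$, where $\eta'_j=\sum_{k=0}^{j}\binom{j}{k}v^kw^{j-k}\beta_k$. Transposing again yields
\[
P_{\alpha,\beta}^{[x, y, z]}\,\bigl(P_{\lambda_w, \mu}^{[0, v, w]}\bigr)^t = P^{[vx+w,\; vy-wz,\; z]}_{\alpha,\eta'} \in \mathrm{WRM}(n).
\]
Here $\eta'_0=\beta_0=\alpha_0$, so the corner condition is preserved. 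Hence right multiplication by elements of $\mathsf F^t$ maps $\mathrm{WRM}(n)$ into itself.

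Finally, I would define $B^{g}=Bg$ for $g\in\mathsf F^t$, which is a right action. The identity of $\mathsf F^t$ is $(P_{\lambda_0,\mu}^{[0,1,0]})^t=I$, so $B^{I}=B$. Associativity of matrix multiplication gives $(B^{g})^{h}=(Bg)h=B(gh)=B^{gh}$. That $\mathsf F^t$ is a group, with inverses $\bigl((P_{\lambda_w,\mu}^{[0,v,w]})^{-1}\bigr)^t$, is Lemma \ref{lm1}. Alternatively, mirroring the proof of Proposition \ref{pro1}, one could use $B\cdot g^{-1}$ to obtain a left action.

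The only real content is the closure computation in the second step. The one point needing care there is that Eq. (\ref{eq1}) was proved for arbitrary parameters and arbitrary initial sequences, so it may legitimately be applied with the parameters $(z,y,x)$ and the swapped sequences $(\beta,\alpha)$.
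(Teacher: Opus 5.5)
Your proposal is correct. Its overall shape matches the paper's: establish the closure formula Eq.~(\ref{eq7}), $P_{\alpha,\beta}^{[x,y,z]}(P_{\lambda_w,\mu}^{[0,v,w]})^t=P_{\alpha,\eta}^{[vx+w,\,vy-wz,\,z]}$, then define the action by $B\mapsto Bg$ and check the identity and compatibility axioms. The difference is in how you justify Eq.~(\ref{eq7}).

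The paper only asserts it ``reasoning as we did previously at Proposition~\ref{pro1}''. That implicitly asks the reader to redo the index computation behind Eq.~(\ref{eq1}) with rows and columns exchanged, and the paper never writes this out. You instead note $(P_{\alpha,\beta}^{[x,y,z]})^t=P_{\beta,\alpha}^{[z,y,x]}$ and get Eq.~(\ref{eq7}) as a formal consequence of Eq.~(\ref{eq1}) by transposing $AB^t$. The parameter bookkeeping is right: $[z,\,vy-wz,\,vx+w]$ transposes to $[vx+w,\,vy-wz,\,z]$, with $\eta'$ built from $\beta$, which is exactly the paper's formula. Truncation to $n\times n$ causes no trouble, since transposition commutes with taking leading blocks.

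Your route gives a complete proof with no new computation. It also makes explicit that $\mathrm{WRM}(n)$ is closed under transposition, and it explains why Eq.~(\ref{eq7}) mirrors Eq.~(\ref{eq1}), with $x$ and $z$ trading places. The paper's route, if carried out, would be a self-contained direct verification, but it is left unwritten.

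For the action axioms, your appeal to plain associativity is leaner than the paper's detour through Eq.~(\ref{eq-0}). That detour is unnecessary once Lemma~\ref{lm1} guarantees that $\mathsf{F}^t$ is a group with identity $I$.
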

\begin{proof} Reasoning as we did previously at Proposition \ref{pro1}, assuming 
$P_{\lambda_w, \mu}^{[0, v, w]}\in \mathsf {F}$ and $P_{\alpha, \beta}^{[x, y, z]}\in \mathrm{WRM}(n)$, where $\alpha, \beta$ are two arbitrary sequences with $\alpha_0=\beta_0$, we have  
\begin{align}\label{eq7}
P_{\alpha, \beta}^{[x, y, z]}\cdot (P_{\lambda_w, \mu}^{[0, v, w]})^t=P^{[vx+w, vy-wz, z]}_{\alpha, \eta},
\end{align}
where $\eta=(\eta_i)_{i\geq 0}$ with 
$$\eta_i= \sum_{k=0}^{i} \binom{i}{k} v^k  w^{i-k} \beta_{k},  \ \text{for} \  i\geq 0.$$
We can now define an action of $\mathsf {F}^t$ on $\mathrm{WRM}(n)$ by setting 
\begin{align*}(P_{\alpha, \beta}^{[x, y, z]})^{(P_{\lambda_w, \mu}^{[0, v, w]})^t}=P_{\alpha, \beta}^{[x, y, z]} (P_{\lambda_w, \mu}^{[0, v, w]})^{t}=P^{[vx+w, vy-wz, z]}_{\alpha, \eta}.\end{align*}
To complete the proof, it suffices to show that this is an action. To see this, let $P_{\lambda_w, \mu}^{[0, v, w]}, P_{\lambda_s, \mu}^{[0, r, s]}\in \mathsf {F}$. Then for $P_{\alpha, \beta}^{[x, y, z]}\in \mathrm{WRM}(n)$, we have 
$$(P_{\alpha, \beta}^{[x, y, z]})^{(P_{\lambda_{0}, {\mu}}^{[0, 1, 0]})^t}=P_{\alpha, \beta}^{[x, y, z]}(P_{\lambda_{0}, {\mu}}^{[0, 1, 0]} )^{t}=P_{\alpha, \beta}^{[x, y, z]}P_{\lambda_{0}, {\mu}}^{[0, 1, 0]} =P^{[x,  y, z]}_{\alpha, \beta},$$ and also using Eq. (\ref{eq-0}), we obtain
\begin{align*} (P_{\alpha, \beta}^{[x, y, z]})^{(P_{\lambda_w, \mu}^{[0, v, w]})^t (P_{\lambda_s, \mu}^{[0, r, s]})^t}&=(P_{\alpha, \beta}^{[x, y, z]})^{(P_{\lambda_s, \mu}^{[0, r, s]} P_{\lambda_w, \mu}^{[0, v, w]})^t}\\
& =(P_{\alpha, \beta}^{[x, y, z]})^{(P_{\lambda_{rw+s}, \mu}^{[0, rv, rw+s]})^t}\\ 
&= P_{\alpha, \beta}^{[x, y, z]} (P_{\lambda_{rw+s}, \mu}^{[0, rv, rw+s]})^t\\
&= (P_{\alpha, \beta}^{[x, y, z]}(P_{\lambda_s, \mu}^{[0, r, s]} P_{\lambda_w, \mu}^{[0, v, w]})^t\\
&= P_{\alpha, \beta}^{[x, y, z]} (P_{\lambda_w, \mu}^{[0, v, w]})^t (P_{\lambda_s, \mu}^{[0, r, s]})^t \\
&=\left((P_{\alpha, \beta}^{[x, y, z]})^{(P_{\lambda_w, \mu}^{[0, v, w]})^t}\right)^{(P_{\lambda_s, \mu}^{[0, r, s]})^t}.
\end{align*}
This completes the proof. \end{proof}

\section{A Factorization of Matrices With Recursive Entries}

By combining Propositions \ref{pro1} and \ref{pro2}, we derive the following theorem.

\begin{theorem}[Unifying Factorization]\label{mainth}
For any $r, s, v, w, x, y, z\in \mathbb{C}\) such that $rv\neq 0$, and any two sequences $\alpha, \beta$ with $\alpha_0=\beta_0$, we have the following decomposition:
\[
P_{\alpha, \beta}^{[x, y, z]}=P_{\lambda_s, \mu}^{[0, r, s]}  \cdot P_{\rho, \sigma}^{\left[ \frac{x-w}{v}, \frac{y+xs+zw-sw}{rv}, \frac{z-s}{r} \right]}\cdot (P_{\lambda_w, \mu}^{[0, v, w]} )^t,
\]
where the transformed sequences $\rho=(\rho_i)_{i\geq 0}$ and  $\sigma=(\sigma_i)_{i\geq 0}$ are given by:
\begin{align*}
\rho_{0}=\alpha_0, \ \text{and} \  \rho_{i}= \left(\alpha_i - \sum_{k=0}^{i-1} \binom{i}{k} r^k s^{i-k}  \rho_k\right)/r^i,  \quad \text{for } \  i\geq 1,\\
\sigma_{0}=\beta_0, \ \text{and} \  \sigma_{i}=\left( \beta_i-\sum_{k=0}^{i-1} \binom{i}{k} v^k w^{i-k}  \sigma_k\right)/v^i,  \quad \text{for } \   i\geq 1.
 \end{align*}
\end{theorem}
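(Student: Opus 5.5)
The plan is to verify the identity directly by multiplying out the right-hand side. I will apply Eq. (\ref{eq7}) from Proposition \ref{pro2} first, then Eq. (\ref{eq1}) from Proposition \ref{pro1}, and check that the parameters and the boundary sequences come out as $[x,y,z]$, $\alpha$ and $\beta$.

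First, note that $\rho_0=\alpha_0=\beta_0=\sigma_0$, so the middle matrix is well defined. Denote it by $Q=P_{\rho,\sigma}^{[x',y',z']}$ with
\[
x'=\tfrac{x-w}{v},\quad y'=\tfrac{y+xs+zw-sw}{rv},\quad z'=\tfrac{z-s}{r}.
\]
Since $rv\neq 0$, the recursions defining $\rho$ and $\sigma$ make sense. Multiplying them by $r^i$ and $v^i$ respectively, they are equivalent to
\[
\alpha_i=\sum_{k=0}^{i}\binom{i}{k}r^ks^{i-k}\rho_k,\qquad \beta_i=\sum_{k=0}^{i}\binom{i}{k}v^kw^{i-k}\sigma_k\qquad (i\ge 0),
\]
because the $k=i$ term is exactly $r^i\rho_i$ (resp.\ $v^i\sigma_i$). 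This inversion is the key observation.

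Step 1: compute $Q\,(P_{\lambda_w,\mu}^{[0,v,w]})^t$ using Eq. (\ref{eq7}). The resulting parameters are
\begin{align*}
vx'+w&=x,\\
vy'-wz'&=\tfrac{y+xs+zw-sw-wz+ws}{r}=\tfrac{y+xs}{r},\\
z'&=\tfrac{z-s}{r}.
\end{align*}
The first column stays $\rho$. The first row becomes $\eta_i=\sum_k\binom{i}{k}v^kw^{i-k}\sigma_k=\beta_i$. Hence
\[
Q\,(P_{\lambda_w,\mu}^{[0,v,w]})^t=P_{\rho,\beta}^{[x,(y+xs)/r,(z-s)/r]}.
\]

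Step 2: left-multiply this by $P_{\lambda_s,\mu}^{[0,r,s]}$ using Eq. (\ref{eq1}). The resulting parameters are
\begin{align*}
&x,\\
&r\cdot\tfrac{y+xs}{r}-sx=y,\\
&r\cdot\tfrac{z-s}{r}+s=z.
\end{align*}
The first row $\beta$ is unchanged. The first column becomes $\sum_k\binom{i}{k}r^ks^{i-k}\rho_k=\alpha_i$. This gives exactly $P_{\alpha,\beta}^{[x,y,z]}$.

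The only point needing care, and the main obstacle, is that Eqs. (\ref{eq1}) and (\ref{eq7}) were established for $n\times n$ principal submatrices. I would remark that they hold for every $n$. Moreover, the matrices in $\mathsf F$ are lower triangular, so the $(i,j)$ entry of each product involves only indices at most $\max(i,j)$. Therefore truncation commutes with these products, and the identity holds for all $n$, hence also for the infinite matrices. Beyond that, the argument is pure parameter bookkeeping.
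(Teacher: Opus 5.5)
Your proof is correct and is essentially the paper's argument. Both multiply out the right-hand side via Eqs.~(\ref{eq1}) and (\ref{eq7}), and both recognize the recursions for $\rho$ and $\sigma$ as inverting the binomial transforms. The only difference, immaterial by associativity, is the order: you apply the right factor first, passing through $P_{\rho,\beta}^{[x,(y+xs)/r,(z-s)/r]}$, whereas the paper applies the left factor first, passing through $P_{\alpha,\sigma}^{[(x-w)/v,(y+zw)/v,z]}$. Your remark that truncation to principal $n\times n$ submatrices commutes with these triangular products is a correct detail that the paper leaves implicit.
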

\begin{proof}
Applying Eq. (\ref{eq1}), we obtain:
\begin{align*}
P_{\lambda_s, \mu}^{[0, r, s]}  \cdot P_{\rho, \sigma}^{\left[ \frac{x-w}{v}, \frac{y+xs+zw-sw}{rv}, \frac{z-s}{r} \right]} =P_{\eta, \sigma}^{[\frac{x-w}{v}, \frac{y+zw}{v}, z]}
\end{align*}
where $\eta=(\eta_i)_{i\geq 0}$ satisfies the following relation:
\begin{align*}
\eta_{i} =\sum_{k=0}^{i} \binom{i}{k} r^k s^{i-k} \rho_{k}, \quad   \text{for}  \   i\geq 0.
\end{align*}
Note that $\eta_i=\alpha_i$ for all $i \geq 0$. In fact, we can demonstrate that:
\begin{align*}
\eta_i&=\sum_{k=0}^{i} \binom{i}{k} r^k s^{i-k} \rho_{k}=r^i \rho_{i}+\sum_{k=0}^{i-1}\binom{i}{k} r^k s^{i-k} \rho_{k}\\ &= \left(\alpha_i-\sum_{k=0}^{i-1} \binom{i}{k} r^k s^{i-k}  \rho_k\right)+\sum_{k=0}^{i-1}\binom{i}{k} r^k s^{i-k} \rho_{k}=\alpha_i.
\end{align*}
Thus, we can express the above equation as:
\begin{align*}
P_{\lambda_s, \mu}^{[0, r, s]}  \cdot P_{\rho, \sigma}^{\left[ \frac{x-w}{v}, \frac{y+xs+zw-sw}{rv}, \frac{z-s}{r} \right]} =P_{\alpha, \sigma}^{[\frac{x-w}{v}, \frac{y+zw}{v}, z]}.
\end{align*}
Applying Eq. (\ref{eq7}) to $P_{\alpha, \sigma}^{[\frac{x-w}{v}, \frac{y+zw}{v}, z]}$, we have:
\begin{align*}
P_{\alpha, \sigma}^{[\frac{x-w}{v}, \frac{y+zw}{v}, z]} \cdot (P_{\lambda_w, \mu}^{[0, v, w]})^t=P_{\alpha, \nu}^{[x, y, z]},
\end{align*}
where $\nu=(\nu_i)_{i\geq 0}$ satisfies the following relation:
\begin{align*}
\nu_i= \sum_{k=0}^{i} \binom{i}{k} v^k w^{i-k} \sigma_{k}, \ \ 
\text{for} \ i\geq 0. 
\end{align*}
 Reasoning as before, we can observe that $\nu_i = \beta_i$ for all $i \geq 0$,  which completes the proof.
\end{proof}

The applicability of Theorem \ref{mainth} enables us to recover and extend many established results. 
For example, the main theorem presented in \cite[Theorem 1]{MP} demonstrates that for any arbitrary sequences $\alpha$ and $\beta$ with $\alpha_0= \beta_0$, it follows that 
\begin{align*}
P_{\alpha, \beta}^{[1, 0, 1]}=P_{\lambda_1, \mu}^{[0, 1, 1]}  P_{\hat{\alpha}, \hat{\beta}}^{[0, 1, 0]} 
(P_{\lambda_1, \mu}^{[0, 1, 1]})^t
\end{align*} 
where $\hat{\alpha}=(\hat{\alpha}_{i})_{i\geq 0}$ and
$\hat{\beta}=(\hat{\beta}_{i})_{i\geq 0}$ are two sequences with
$$\hat{\alpha}_{i}=\sum_{k=0}^{i}(-1)^{i+k}{i\choose k}\alpha_k \ \ \
\mbox{and} \ \ \
\hat{\beta}_{i}=\sum_{k=0}^{i}(-1)^{i+k}{i\choose k}\beta_k.$$
In particular, if $\alpha=(1, 1, 1, \ldots)$ and $\beta=(1, 1, 1, \ldots)$, then we have   
$\hat{\alpha}=(1, 0, 0, \ldots)$ and  $\hat{\beta}=(1, 0, 0, \ldots)$, and so $P_{\hat{\alpha}, \hat{\beta}}^{[0, 1, 0]}=P_{\lambda_0, \mu}^{[0, 1, 0]} =I$ (the identity matrix), hence we obtain
\begin{align*}
P_{\alpha, \beta}^{[1, 0, 1]}=P_{\lambda_1, \mu}^{[0, 1, 1]} 
(P_{\lambda_1, \mu}^{[0, 1, 1]})^t,
\end{align*} 
(see \cite{ES}). However, the above results are special cases of Theorem \ref{mainth}. In fact, in Theorem \ref{mainth},  if we take $r=s=v=w=1$ and if $y$ is an arbitrary complex number, then we have 
\begin{align*}
P_{\alpha, \beta}^{[1, y, 1]}=P_{\lambda_1, \mu}^{[0, 1, 1]}  P_{\rho, \sigma}^{[0, y+1, 0]} 
(P_{\lambda_1, \mu}^{[0, 1, 1]})^t.
\end{align*} 
Note that, in this situation, we have $\rho=\hat{\alpha}$ and $\sigma=\hat{\beta}$, and so 
\begin{align*}
P_{\alpha, \beta}^{[1, y, 1]}=P_{\lambda_1, \mu}^{[0, 1, 1]}  P_{\hat{\alpha}, \hat{\beta}}^{[0, y+1, 0]} 
(P_{\lambda_1, \mu}^{[0, 1, 1]})^t.
\end{align*} 
One the other hand, one of the fundamental decomposition theorems for the weighted recurrence matrices was first established by Tan Mingshu~\cite{T}, who proved that when $xz\neq0$, the matrix $P_{\alpha, \beta}^{[x, y, z]}$ admits a factorization of the form 
\[
P_{\alpha, \beta}^{[x, y, z]}=P_{\lambda_z, \mu}^{[0, z, z]} P_{\rho, \sigma}^{[0, 1+y/(xz), 0]} (P_{\lambda_x, \mu}^{[0, x, x]} )^t.
\]
Here one can also see that this decomposition is a special case of Theorem \ref{mainth} when we take $r=s=z$ and $v=w=x$. In fact, we have:
\begin{align*}
P_{\alpha, \beta}^{[x, y, z]}=P_{\lambda_z, \mu}^{[0, z, z]} P_{\rho, \sigma}^{[0, 1+y/(xz), 0]} (P_{\lambda_x, \mu}^{[0, x, x]})^t.
\end{align*}
which is the same result as obtained by Tan Mingshu~\cite{T}.

Tan's decomposition utilizes a weighted Toeplitz matrix. A significant result from Propositions \ref{pro1} and \ref{pro2} is that we can also decompose $P_{\alpha, \beta}^{[x, y, z]}$ into a product that involves a \emph{simple} (unweighted) Toeplitz matrix. This provides us with a more straightforward way to express the decomposition.
\begin{corollary} \label{C1}
For any matrix $P_{\alpha, \beta}^{[x, y, z]}$, such that $y+xz\neq 0$ we have the following decomposition:
\begin{align}\label{eq8}
P_{\alpha, \beta}^{[x, y, z]}=P_{\lambda_z, \mu}^{[0, 1, z]}\cdot  P_{\widetilde{\alpha}, \widetilde{\beta}}^{[0, 1, 0]} \cdot (P_{\lambda_x, \mu}^{[0, y+xz, x]})^t,
\end{align}
where the transformed sequences $\widetilde{\alpha}$, $\widetilde{\beta}$ are given by:
\begin{align}\label{eq9} \widetilde{\alpha}_{0}=\alpha_0, \ \text{and} \ \  \widetilde{\alpha}_{i}=\alpha_i-\sum_{k=0}^{i-1}\binom{i}{k} z^{i-k} \widetilde{\alpha}_k, \   i\geq 1,
\end{align}
and 
 \begin{align}\label{eq10} \widetilde{\beta}_{0}=\beta_0, \  \text{and} \ \  \widetilde{\beta}_{i}=\left(\beta_i - \sum_{k=0}^{i-1} \binom{i}{k} x^{i-k} (y+xz)^k \widetilde{\beta}_k\right)/(y+xz)^i, \  i\geq 1.\end{align}
\end{corollary}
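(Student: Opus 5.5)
This is a direct specialization of Theorem \ref{mainth}. In that theorem we choose the parameters
\[
r=1,\qquad s=z,\qquad v=y+xz,\qquad w=x .
\]
The hypothesis $rv\neq 0$ then reduces exactly to $y+xz\neq 0$, which is what the corollary assumes. With these choices the outer factors of Theorem \ref{mainth} become $P_{\lambda_s,\mu}^{[0,r,s]}=P_{\lambda_z,\mu}^{[0,1,z]}$ on the left and $(P_{\lambda_w,\mu}^{[0,v,w]})^t=(P_{\lambda_x,\mu}^{[0,y+xz,x]})^t$ on the right. These are the two outer factors in Eq. (\ref{eq8}).

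Next we compute the three parameters of the middle factor.
\[
\frac{x-w}{v}=\frac{x-x}{y+xz}=0,\qquad \frac{z-s}{r}=z-z=0,
\]
\[
\frac{y+xs+zw-sw}{rv}=\frac{y+xz+zx-zx}{y+xz}=\frac{y+xz}{y+xz}=1 .
\]
So the middle factor is $P_{\rho,\sigma}^{[0,1,0]}$, an ordinary unweighted Toeplitz matrix.

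Finally we check that $\rho$ and $\sigma$ match $\widetilde\alpha$ and $\widetilde\beta$.
\begin{itemize}
\item Setting $r=1$, $s=z$ in the recursion for $\rho$ gives $\rho_0=\alpha_0$ and $\rho_i=\alpha_i-\sum_{k=0}^{i-1}\binom{i}{k}z^{i-k}\rho_k$ for $i\geq 1$. This is exactly the recursion (\ref{eq9}), so $\rho=\widetilde\alpha$.
\item Setting $v=y+xz$, $w=x$ in the recursion for $\sigma$ gives the recursion (\ref{eq10}) verbatim, so $\sigma=\widetilde\beta$.
\end{itemize}
Both identifications follow by induction on $i$, since in each case the two sequences share the same initial term and the same recursion. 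Substituting all of this into the factorization of Theorem \ref{mainth} yields Eq. (\ref{eq8}).

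There is no real obstacle beyond choosing the parameters. The only point needing care is the simplification of the middle $y$-parameter to $1$, which is the step where the choice $v=y+xz$ is essential.
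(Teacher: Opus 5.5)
Your proposal is correct and matches the paper's approach: the corollary is exactly Theorem~\ref{mainth} with $r=1$, $s=z$, $v=y+xz$, $w=x$. Your reduction of the middle parameters to $[0,1,0]$ and your identification $\rho=\widetilde\alpha$, $\sigma=\widetilde\beta$ are both right. The paper instead describes re-running the argument of Theorem~\ref{mainth} via Propositions~\ref{pro1} and~\ref{pro2} with these data; citing the theorem directly, as you do, is the same computation stated more cleanly.
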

\begin{proof}
The proof follows a similar approach to Theorem \ref{mainth}. We need just to take $\widetilde{\alpha}$ and $\widetilde{\beta}$ as given in the statement and  using Propositions \ref{pro1} and  \ref{pro2} to compute the right-hand side of Eq. (\ref{eq8}).
\end{proof}

\begin{ex} {\rm  Given $a, b\in {\Bbb C}$ with $ab\neq 0$, we have 
$$P_{\lambda_a, \lambda_b}^{[b, ab, a]}=P_{\lambda_a, \mu}^{[0, 1, a]}\cdot  P_{\widetilde{\alpha}, \widetilde{\beta}}^{[0, 1, 0]} \cdot (P_{\lambda_b, \mu}^{[0, 2ab, b]})^t.$$
Here, of course, $\widetilde{\alpha}=\widetilde{\beta}=(1, 0, 0, \ldots)$ and so $P_{\widetilde{\alpha}, \widetilde{\beta}}^{[0, 1, 0]}=I$.
In particular, for $n=3$, we have the following decomposition:
\[
\begin{pmatrix}
1 & b & b^2  \\
a & 3ab & 5ab^2 \\
a^2 & 5a^2b & 13a^2b^2\\
\end{pmatrix}
=
\begin{pmatrix}
1 & 0 & 0 \\
a & 1 & 0 \\
a^2 &2a & 1 \\

\end{pmatrix}
\cdot
\begin{pmatrix}
1 & 0 & 0  \\
0 & 1 & 0  \\
0 & 0 & 1 \\
\end{pmatrix}
\cdot
\begin{pmatrix}
1 & 0 & 0 \\
b & 2ab & 0 \\
b^2 & 4ab^2 & 4a^2b^2  \\
\end{pmatrix}^t.
\]
Furthermore, we obtain $\det P_{\lambda_a, \lambda_b}^{[b, ab, a]}=(2ab)^{\binom{n}{2}}$ (see also Corollary \ref{cor2}).}
\end{ex}

\begin{ex} {\rm  For $a\in {\Bbb C}\setminus \{0\}$, we have 
$$P_{\lambda_{a+1}, \lambda_{a+1}}^{[a, 1-a^2, a]}=P_{\lambda_a, \mu}^{[0, 1, a]}\cdot  P_{\widetilde{\alpha}, \widetilde{\beta}}^{[0, 1, 0]} \cdot (P_{\lambda_a, \mu}^{[0, 1, a]})^t.$$
Here, $\widetilde{\alpha}=\widetilde{\beta}=(1, 1, 1, \ldots)$ and so $P_{\widetilde{\alpha}, \widetilde{\beta}}^{[0, 1, 0]}=J$ (the unit matrix).
In particular, for $n=3$, we have the following decomposition:
\begin{align*}
&\begin{pmatrix}
1 & a+1 & (a+1)^2\\
a+1 & (a+1)^2 & (a+1)^3 \\
(a+1)^2 & (a+1)^3 & (a+1)^4\\
\end{pmatrix}=
\begin{pmatrix}
1 & 0 & 0 \\
a & 1 & 0 \\
a^2 &2a & 1 \\
\end{pmatrix}
\cdot
\begin{pmatrix}
1 & 1 & 1  \\
1 & 1 & 1  \\
1 & 1 & 1 \\
\end{pmatrix}
\cdot
\begin{pmatrix}
1 & 0 & 0 \\
a & 1 & 0 \\
a^2 & 2a & 1  \\
\end{pmatrix}^t.
\end{align*}}
\end{ex}

\section{Evaluations of Some Determinants of Matrices in $\mathrm{WRM}(n)$} The study of determinants for specific subsets of weighted recurrence matrices $\mathrm{WRM}(n)$ has been explored in various literature sources (see, for example, \cite{Bach, K2, MSS1}). Our decomposition of the elements of $\mathrm{WRM}(n)$, in accordance with Corollary \ref{C1}, provides a powerful tool for evaluating determinants. The key advantages are:
\begin{itemize}
\item[-] The determinants of elements in the group $\mathsf{F}$ are straightforward to compute.
\item[-]  Toeplitz determinants have been extensively studied and often feature recursive formulas. 
\end{itemize}

According to the decomposition in Corollary \ref{C1}, we have 
\[P_{\alpha,\beta}^{[x, y, z]} = P_{\lambda_z, \mu}^{[0, 1, z]} \cdot P_{\widetilde{\alpha},\widetilde{\beta}}^{[0,1,0]} \cdot (P_{\lambda_x,\mu}^{[0, y+xz, x]})^t,\]
and so
\[\det P_{\alpha,\beta}^{[x, y, z]}=\det P_{\lambda_z,\mu}^{[0,1,z]} \cdot \det P_{\widetilde{\alpha},\widetilde{\beta}}^{[0,1,0]} \cdot \det P_{\lambda_x,\mu}^{[0,y+xz, x]})^t.\]
Also, since 
\[\det P_{\lambda_a, \mu}^{[0, b, a]}=b^{\binom{n}{2}},\]
it follows that 
 \begin{align}\label{eq11}
 \det P_{\alpha,\beta}^{[x, y, z]}=(y+xz)^{\binom{n}{2}} \det P_{\widetilde{\alpha},\widetilde{\beta}}^{[0,1,0]}.
  \end{align}
Hence, the evaluation of determinants for matrices in $\mathrm{WRM}(n)$ can be simplified by computing the determinants of the associated Toeplitz matrices through our decomposition method. A particularly straightforward case arises when the central Toeplitz matrix becomes a diagonal matrix. However, achieving this condition imposes very restrictive requirements on the sequences $\alpha$ and $\beta$, as demonstrated in the following lemma.
\begin{pro}\label{pro3}
For $P_{\alpha, \beta}^{[x, y, z]} \in \mathrm{WRM}(n)$, the corresponding Toeplitz matrix $P_{\widetilde{\alpha}, \widetilde{\beta}}^{[0,1,0]}$ is a diagonal matrix  if and only if $\alpha=(\alpha_i)_{i\geq 0}$ and $\beta=(\beta_j)_{j\geq 0}$ are geometric sequences of the form $\alpha_i=c z^i$ and $\beta_j=c x^j$ for some constant $c\in \mathbb{C}$.
\end{pro}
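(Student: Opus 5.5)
The plan is to read off the structure of the Toeplitz factor directly and then invert the triangular recursions (\ref{eq9}) and (\ref{eq10}). Throughout we work under the hypothesis of Corollary \ref{C1}, namely $y+xz\neq 0$, so that $\widetilde{\alpha}$ and $\widetilde{\beta}$ are defined. We also keep the common first term $\alpha_0=\beta_0$, which will turn out to be the constant $c$.

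\textbf{Step 1 (shape of the Toeplitz factor).} By definition of $P^{[0,1,0]}_{\widetilde{\alpha},\widetilde{\beta}}$, its $(i,j)$ entry is $\widetilde{\alpha}_{i-j}$ for $i\geq j$ and $\widetilde{\beta}_{j-i}$ for $j\geq i$; the diagonal entries all equal $\widetilde{\alpha}_0=\widetilde{\beta}_0=\alpha_0$. Hence the $n\times n$ matrix is diagonal if and only if $\widetilde{\alpha}_i=\widetilde{\beta}_i=0$ for $1\leq i\leq n-1$. In other words, $\widetilde{\alpha}$ and $\widetilde{\beta}$ must both equal $(c,0,0,\ldots)$ up to index $n-1$, with $c=\alpha_0=\beta_0$.

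\textbf{Step 2 (inverting the recursions).} Rewriting (\ref{eq9}) and (\ref{eq10}) gives the equivalent "forward" relations
\[
\alpha_i=\sum_{k=0}^{i}\binom{i}{k} z^{i-k}\widetilde{\alpha}_k,\qquad
\beta_i=\sum_{k=0}^{i}\binom{i}{k} x^{i-k}(y+xz)^k\widetilde{\beta}_k,\qquad i\geq 0.
\]
These are exactly the first columns $\eta$ produced in Propositions \ref{pro1} and \ref{pro2}. Because $y+xz\neq 0$, the correspondences $\alpha\leftrightarrow\widetilde{\alpha}$ and $\beta\leftrightarrow\widetilde{\beta}$ are given by lower unitriangular, respectively invertible lower triangular, transformations. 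They are therefore bijections on sequences truncated at any length.

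\textbf{Step 3 (both directions).} For "if": substitute $\widetilde{\alpha}=(c,0,0,\ldots)$ into the forward relation to get $\alpha_i=cz^i$. By injectivity (equivalently, by induction on $i$ in (\ref{eq9}), using $\sum_{k=0}^{i}\binom{i}{k}z^{i-k}\cdot 0=0$ for the tail), the sequence $\alpha_i=cz^i$ forces $\widetilde{\alpha}=(c,0,\ldots)$. The same argument shows $\beta_j=cx^j$ forces $\widetilde{\beta}=(c,0,\ldots)$, because the only surviving term is $k=0$, namely $x^j\widetilde{\beta}_0$. For "only if": if the Toeplitz factor is diagonal, Step 1 gives $\widetilde{\alpha}=\widetilde{\beta}=(c,0,\ldots)$, and the forward relations immediately yield $\alpha_i=cz^i$ and $\beta_j=cx^j$.

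There is no real obstacle here. The only point needing care is to justify the forward relations, which amounts to unwinding the definitions in (\ref{eq9}) and (\ref{eq10}). One should also note that in the finite $n\times n$ setting the statement concerns only indices up to $n-1$. Finally, the nondegeneracy $y+xz\neq 0$ is what makes $\widetilde{\beta}$ well defined and the correspondence injective.
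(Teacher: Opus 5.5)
Your proposal is correct and follows essentially the same route as the paper. You reduce diagonality to $\widetilde{\alpha}=\widetilde{\beta}=(c,0,0,\ldots)$ with $c=\alpha_0$, and then use the recursions (\ref{eq9}) and (\ref{eq10}), rewritten as forward triangular relations, to get $\alpha_i=cz^i$ and $\beta_j=cx^j$ and, by induction or injectivity, the converse; your index range $1\le i\le n-1$ is also more careful than the paper's ``$i,j>1$''.
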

\begin{proof}
If $P_{\widetilde{\alpha}, \widetilde{\beta}}^{[0,1,0]}$ is a diagonal matrix, then $\widetilde{\alpha}_0=\widetilde{\beta}_0=\alpha_0$ and $\widetilde{\alpha}_i=0=\widetilde{\beta}_j$ for all $i, j>1$. It is easy to see from Eqs. (\ref{eq9}) and (\ref{eq10}) that $\alpha_i= \alpha_0 z^i$ and $\beta_j=\alpha_0 x^j$ for all $i, j>1$. Now, the result follows by taking $c=\alpha_0$.

Conversely, suppose that $\alpha_i=c z^i$ and $\beta_j=c x^j$ for all $i, j\geq 0$. It is enough to show that 
$\widetilde{\alpha}=(\widetilde{\alpha}_i)_{i\geq 0}=(c, 0, 0,  \ldots)$ and $\widetilde{\beta}=(\widetilde{\beta}_j)_{j\geq 0}=(c, 0, 0,  \ldots)$. First of all, it is clear that $\widetilde{\alpha}_0=c=\widetilde{\beta}_0$. We proceed by induction on $i$ (resp. $j$) to show that $\widetilde{\alpha}_i=0$ (resp. $\widetilde{\beta}_j=0$) for all $i>0$ (resp. $j>0$). First, if $i=1$, then we obtain  
$$\widetilde{\alpha}_1=\alpha_1-z\widetilde{\alpha}_0=cz-cz=0.$$ 
Next, suppose that $i>1$. Now, it follows by the inductive hypothesis that
$$\widetilde{\alpha}_i=\alpha_i-\sum_{k=0}^{i-1} \binom{i}{k} z^{i-k} \widetilde{\alpha}_k=\alpha_i-\widetilde{\alpha}_0z^i=cz^i-cz^i=0.
$$
A similar argument shows that $\widetilde{\beta}=(c, 0, 0,  \ldots)$. 
Thus, $P_{\widetilde{\alpha},\widetilde{\beta}}^{[0,1,0]}=cI$ is a diagonal matrix.
\end{proof}

\begin{corollary}\label{cor2} If $\alpha=(\alpha_i)_{i\geq 0}$ and $\beta=(\beta_j)_{j\geq 0}$ are two geometric sequences of the form $\alpha_i=cz^i$ and $\beta_j=cx^j$ for some constant $c\in \mathbb{C}$, then we have
\[
\det P_{\alpha,\beta}^{[x, y, z]}=c^n(y+xz)^{\binom{n}{2}}.
\]
\end{corollary}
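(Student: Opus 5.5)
The plan is to combine Eq. (\ref{eq11}) with Proposition \ref{pro3}. There is one subtlety. Eq. (\ref{eq11}) comes from Corollary \ref{C1}, which needs $y+xz\neq 0$. So I first treat the generic case $y+xz\neq 0$ and afterwards handle $y+xz=0$ on its own.

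Assume first that $y+xz\neq 0$. By hypothesis $\alpha_i=cz^i$ and $\beta_j=cx^j$, which is exactly the situation of Proposition \ref{pro3}. That proposition shows $\widetilde{\alpha}=\widetilde{\beta}=(c,0,0,\ldots)$, so $P_{\widetilde{\alpha},\widetilde{\beta}}^{[0,1,0]}=cI$ and its $n\times n$ determinant is $c^n$. Substituting this into Eq. (\ref{eq11}) gives
\[
\det P_{\alpha,\beta}^{[x,y,z]}=(y+xz)^{\binom{n}{2}}\,c^n,
\]
which is the claim. Behind Eq. (\ref{eq11}) are the two facts $\det P_{\lambda_z,\mu}^{[0,1,z]}=1$ and $\det P_{\lambda_x,\mu}^{[0,y+xz,x]}=(y+xz)^{\binom n2}$. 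Both hold because these matrices are lower triangular with diagonal entries $b^i$, and the product of the diagonal entries is $b^{0+1+\cdots+(n-1)}=b^{\binom n2}$.

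The main obstacle is the degenerate case $y+xz=0$, where Corollary \ref{C1} is not available. Here I would use continuity. For fixed $n$, every entry of the principal $n\times n$ submatrix of $P_{\alpha,\beta}^{[x,y,z]}$ with $\alpha=(cz^i)$, $\beta=(cx^j)$ is a polynomial in $c,x,y,z$, because the recurrence only adds and multiplies. So $\det P_{\alpha,\beta}^{[x,y,z]}$ is a polynomial in $(c,x,y,z)$. The target expression $c^n(y+xz)^{\binom n2}$ is also a polynomial. The two agree on the Zariski-dense set $\{y+xz\neq0\}$, hence they agree identically. In particular, when $y+xz=0$ and $n\ge 2$ the determinant vanishes, and when $n=1$ it equals $c$, which matches the formula with the convention $0^0=1$.

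If one prefers to avoid the density argument, there is a direct check for $y=-xz$. In that case the whole matrix should have rank one, with $P_{i,j}=cz^ix^j$. This satisfies the recurrence because
\[
x\cdot cz^ix^{j-1}-xz\cdot cz^{i-1}x^{j-1}+z\cdot cz^{i-1}x^j=cz^ix^j,
\]
and it matches the boundary values. A rank-one matrix with $n\ge2$ has zero determinant, which confirms the formula in this case as well.
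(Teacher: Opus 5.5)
Your proposal is correct and follows the paper's own route: Corollary~\ref{cor2} is obtained there by inserting $P_{\widetilde{\alpha},\widetilde{\beta}}^{[0,1,0]}=cI$ from Proposition~\ref{pro3} into Eq.~(\ref{eq11}). Your separate treatment of $y+xz=0$, either through the rank-one matrix $P_{i,j}=cz^ix^j$ or through the polynomial-identity argument, covers a case the paper's derivation silently skips: Eq.~(\ref{eq11}) rests on Corollary~\ref{C1}, which requires $y+xz\neq 0$, while the corollary is stated without that hypothesis.
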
 

Some results can be regarded as special cases of Corollary \ref{cor2}. In the sequel, we will consider three special cases:
\begin{itemize}
\item[-] If $c=x=z=1$ and $y=0$, then $P_{\alpha,\beta}^{[1, 0, 1]}$ is the classical Pascal matrix and we have $\det P_{\alpha,\beta}^{[1, 0, 1]}=1$.
\item[-] If $c=y=z=1$ and $x=0$, then $P_{\alpha,\beta}^{[0, 1, 1]}$ is an unipotent lower triangular matrix and we have 
$\det P_{\alpha,\beta}^{[0, 1, 1]}=1$.
\item[-] If $c=x=z=1$, then $\det P_{\alpha,\beta}^{[1, 1, 1]}=(y+1)^{\binom{n}{2}}$ (see also \cite[Theorem 1]{K2}).
\end{itemize}

 \begin{corollary}\cite[Theorem 1]{K2}\label{cor3} If $\alpha=(\alpha_i)_{i\geq 0}$ and $\beta=(\beta_i)_{i\geq 0}$ are two geometric sequences of the form $\alpha_i=a^i$ and $\beta_i=b^i$ for some constants $a, b\in \mathbb{C}$ and $y\in \mathbb{C}\setminus \{-1\}$, then we have
\[
\det P_{\alpha,\beta}^{[1, y, 1]}=(1+y)^{\binom{n-1}{2}}(y+a+b-ab)^{n-1}.
\]
\end{corollary}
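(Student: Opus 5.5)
Take $x=z=1$, so $y+xz=1+y\neq 0$ by the hypothesis $y\neq -1$, and apply Corollary \ref{C1}. Eq. (\ref{eq11}) then reduces the problem to a Toeplitz determinant:
\[
\det P_{\alpha,\beta}^{[1,y,1]}=(1+y)^{\binom{n}{2}}\det P_{\widetilde{\alpha},\widetilde{\beta}}^{[0,1,0]}.
\]
So the plan has two parts: compute the transformed sequences $\widetilde{\alpha},\widetilde{\beta}$ explicitly, and then evaluate the resulting Toeplitz determinant.

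\emph{Computing $\widetilde{\alpha}$.} With $z=1$, Eq. (\ref{eq9}) says $\sum_{k=0}^{i}\binom{i}{k}\widetilde{\alpha}_k=\alpha_i=a^i$. This is binomial inversion, so
\[
\widetilde{\alpha}_i=\sum_{k}(-1)^{i-k}\binom{i}{k}a^k=(a-1)^i .
\]
\emph{Computing $\widetilde{\beta}$.} Put $q=1+y$. Eq. (\ref{eq10}) says $\sum_{k}\binom{i}{k}q^k\widetilde{\beta}_k=b^i$. Inverting in the same way gives $q^i\widetilde{\beta}_i=(b-1)^i$, so
\[
\widetilde{\beta}_i=\left(\frac{b-1}{q}\right)^i .
\]
Both inversions can be checked directly by induction from the recursions, exactly as in the proof of Proposition \ref{pro3}.

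\emph{The Toeplitz determinant.} Write $u=a-1$ and $t=(b-1)/q$. The matrix $T=P_{\widetilde{\alpha},\widetilde{\beta}}^{[0,1,0]}$ has entries $T_{i,j}=u^{i-j}$ for $i\ge j$ and $T_{i,j}=t^{j-i}$ for $j>i$. This is a Kac–Murdock–Szegő-type matrix, and I expect its determinant to be $(1-ut)^{n-1}$. The step I expect to be the main obstacle is proving this cleanly. My approach is elementary row operations: for $i=n-1$ down to $1$, replace row $i$ by (row $i$) $-\,u\cdot$(row $i-1$). Then:
\begin{itemize}
\item entries with $j<i$ become $u^{i-j}-u\cdot u^{i-1-j}=0$;
\item the diagonal entry becomes $1-ut$;
\item entries with $j>i$ become $t^{j-i}-u\,t^{j-i+1}=t^{j-i}(1-ut)$.
\end{itemize}
The new matrix is upper triangular with diagonal $(1,1-ut,\dots,1-ut)$, so $\det T=(1-ut)^{n-1}$.

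\emph{Assembling.} We have
\[
1-ut=1-\frac{(a-1)(b-1)}{1+y}=\frac{y+a+b-ab}{1+y}.
\]
Therefore
\[
\det P_{\alpha,\beta}^{[1,y,1]}=(1+y)^{\binom{n}{2}-(n-1)}(y+a+b-ab)^{n-1}.
\]
Since $\binom{n}{2}-(n-1)=\binom{n-1}{2}$, this is the claimed formula. The hypothesis $y\neq-1$ is used only to justify Corollary \ref{C1} and the division by $q$. The final identity is polynomial in $y$, so it could also be extended by continuity, but that is not needed here.
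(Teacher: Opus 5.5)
Your proposal is correct and follows the paper's route. You reduce via Eq.~(\ref{eq11}) to the Toeplitz determinant, identify $\widetilde{\alpha}_i=(a-1)^i$ and $\widetilde{\beta}_i=\bigl((b-1)/(1+y)\bigr)^i$, and evaluate $\det P_{\widetilde{\alpha},\widetilde{\beta}}^{[0,1,0]}=\bigl(1-(a-1)(b-1)/(1+y)\bigr)^{n-1}$. The only differences are presentational: you get the sequences by binomial inversion instead of induction, and your bottom-up row reduction proves the Toeplitz determinant that the paper only asserts is easy to see.
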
 
\begin{proof} By Eq. (\ref{eq11}), we have 
 \begin{align}\label{eq12} \det P_{\alpha,\beta}^{[1, y, 1]}=(1+y)^{\binom{n}{2}} \det P_{\widetilde{\alpha}, \widetilde{\beta}}^{[0,1,0]}.
 \end{align}
Now we compute $\widetilde{\alpha}$ and $\widetilde{\beta}$. It follows by Eqs. (\ref{eq9}) and (\ref{eq10}) that 
\begin{align*} \widetilde{\alpha}_{0}=1, \ \text{and} \ \  \widetilde{\alpha}_{i}=a^i-\sum_{k=0}^{i-1}\binom{i}{k} \widetilde{\alpha}_k, \   i\geq 1,
\end{align*}
and 
 \begin{align*} \widetilde{\beta}_{0}=1, \  \text{and} \ \  \widetilde{\beta}_{i}=\left(b^i-\sum_{k=0}^{i-1} \binom{i}{k}(1+y)^k \widetilde{\beta}_k\right)/(1+y)^i, \  i\geq 1.\end{align*}
 Trying induction on $i$, we see that $\widetilde{\alpha}_{i}=(a-1)^i$ and $ \widetilde{\beta}_i=\left(\frac{b-1}{1+y}\right)^i$ for all $i\geq 0$, and it is easy to see that 
$$\det P_{\widetilde{\alpha}, \widetilde{\beta}}^{[0,1,0]}=\left(1-(a-1)\left(\frac{b-1}{1+y}\right)\right)^{n-1}=\left(\frac{y+a+b-ab}{1+y}\right)^{n-1}.$$
The result now follows by substituting this into Eq. (\ref{eq12}). \end{proof}
 
\begin{corollary}\cite[Theorem 3]{K2}\label{cor4} If $\alpha=(\alpha_i)_{i\geq 0}$ and $\beta=(\beta_i)_{i\geq 0}$ are two arithmatic sequences of the form $\alpha_i=i$ and $\beta_i=-i$ and $y\in \mathbb{C}\setminus \{-1\}$, then we have
\[
\det P_{\alpha,\beta}^{[1, y, 1]}(2n)=(1+y)^{2n(n-1)}.
\]
\end{corollary}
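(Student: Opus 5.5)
The plan is to apply Eq. (\ref{eq11}) with $x=z=1$ and matrix size $2n$, and then evaluate the resulting Toeplitz determinant. Write $q=1+y$, which is nonzero by hypothesis. Since $y+xz=q\neq 0$, Corollary \ref{C1} applies, and Eq. (\ref{eq11}) gives
\[
\det P_{\alpha,\beta}^{[1,y,1]}(2n)=q^{\binom{2n}{2}}\det P_{\widetilde{\alpha},\widetilde{\beta}}^{[0,1,0]}(2n).
\]
So everything reduces to computing the transformed sequences $\widetilde{\alpha}$, $\widetilde{\beta}$ from Eqs. (\ref{eq9}) and (\ref{eq10}).

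First I compute $\widetilde{\alpha}$. With $z=1$, Eq. (\ref{eq9}) says exactly that $\alpha_i=\sum_{k=0}^{i}\binom{i}{k}\widetilde{\alpha}_k$, so $\widetilde{\alpha}$ is the inverse binomial transform of $\alpha$:
\[
\widetilde{\alpha}_i=\sum_{k=0}^{i}(-1)^{i-k}\binom{i}{k}k .
\]
This equals $1$ for $i=1$ and $0$ otherwise, using $\sum_k(-1)^{i-k}\binom{i}{k}k=i\sum_k(-1)^{i-k}\binom{i-1}{k-1}=i\,\delta_{i,1}$. Hence $\widetilde{\alpha}=(0,1,0,0,\ldots)$. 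If one prefers, this can be confirmed by a direct induction on $i$, as in Corollary \ref{cor3}.

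Next I compute $\widetilde{\beta}$. With $x=1$, Eq. (\ref{eq10}) says $\beta_i=\sum_{k}\binom{i}{k}q^k\widetilde{\beta}_k$. The same inversion applied to the sequence $(q^k\widetilde{\beta}_k)_k$ gives
\[
q^i\widetilde{\beta}_i=\sum_k(-1)^{i-k}\binom{i}{k}(-k)=-\delta_{i,1}.
\]
Therefore $\widetilde{\beta}=(0,-1/q,0,0,\ldots)$.

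Consequently $T=P_{\widetilde{\alpha},\widetilde{\beta}}^{[0,1,0]}(2n)$ is tridiagonal: its main diagonal is zero, its subdiagonal entries are $1$, and its superdiagonal entries are $-1/q$. Let $D_m$ be its $m\times m$ leading principal minor. Expanding along the last row gives $D_m=0\cdot D_{m-1}-(1)(-1/q)D_{m-2}=q^{-1}D_{m-2}$, with $D_0=1$ and $D_1=0$. Hence $D_{2n}=q^{-n}$; note that only even order gives a nonzero value, which explains why the statement is about size $2n$. Finally,
\[
\binom{2n}{2}-n=n(2n-1)-n=2n(n-1),
\]
which yields $\det P_{\alpha,\beta}^{[1,y,1]}(2n)=(1+y)^{2n(n-1)}$.

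The main point requiring care is the correct reading of Eq. (\ref{eq10}) as a weighted binomial inversion, so that the factor $q^{-1}$ lands in $\widetilde{\beta}_1$ and the other entries of $\widetilde{\beta}$ vanish. I would double-check this step against a small case, for example $n=1$: the matrix $\begin{pmatrix}0&-1\\1&y\end{pmatrix}$ has determinant $1=(1+y)^0$, as the formula predicts.
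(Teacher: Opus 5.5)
Your proposal is correct and follows the paper's own route: you reduce via Eq.~(\ref{eq11}) to the Toeplitz factor, find $\widetilde{\alpha}=(0,1,0,\ldots)$ and $\widetilde{\beta}=(0,-1/(1+y),0,\ldots)$ (by binomial inversion where the paper uses induction), and evaluate the resulting tridiagonal determinant as $(1+y)^{-n}$. One small slip in your sanity check: the $(1,1)$ entry of the $2\times 2$ matrix is $\alpha_1+y\cdot 0+\beta_1=0$, not $y$, although the determinant is still $1$.
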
 
\begin{proof} Once again by Eq. (\ref{eq11}), we have 
 \begin{align}\label{eq13} \det P_{\alpha,\beta}^{[1, y, 1]}(2n)=(1+y)^{\binom{2n}{2}} \det P_{\widetilde{\alpha}, \widetilde{\beta}}^{[0,1,0]}(2n).
 \end{align}
 To calculate $\det P_{\widetilde{\alpha}, \widetilde{\beta}}^{[0,1,0]}(2n)$, we first need to determine the sequences $\widetilde{\alpha}\) and \(\widetilde{\beta}$. First, using Eqs. (\ref{eq9}) and (\ref{eq10}), we have 
\begin{align*} \widetilde{\alpha}_{0}=0, \ \text{and} \ \  \widetilde{\alpha}_{i}=i-\sum_{k=0}^{i-1}\binom{i}{k} \widetilde{\alpha}_k, \   i\geq 1,
\end{align*}
and 
 \begin{align*} \widetilde{\beta}_{0}=0, \  \text{and} \ \  \widetilde{\beta}_{i}=\left(-i-\sum_{k=0}^{i-1} \binom{i}{k}(1+y)^k \widetilde{\beta}_k\right)/(1+y)^i, \  i\geq 1.\end{align*}
 Trying induction on $i$, we see that $\widetilde{\alpha}_{i}=\delta_{i, 1}$ and $ \widetilde{\beta}_i=\frac{-\delta_{i, 1}}{(1+y)^i}$ for all $i\geq 0$, where the ``Kronecker delta" $\delta_{i, j}$ is $1$ if $i=j$ and is $0$ otherwise. It is now easy to show that
$$\det P_{\widetilde{\alpha}, \widetilde{\beta}}^{[0,1,0]}(2n)=\frac{1}{(1+y)^n}.$$
The result is obtained by substituting this into Equation (\ref{eq13}) and performing further simplifications.
\end{proof}

\section*{Acknowledgments} The first author thanks support of the program of Henan
University of Technology (2024PYJH019), the projects of Education Department of
Henan Province (YJS2022JC16, 25A110006), and the Natural Science Foundation of Henan Province
(242300421384). The third author would like to express his sincere gratitude to the
Department of Mathematics at the Federal University of Cear\'{a} and the project
\emph{Jangada Din\^{a}mica}, supported by the Instituto Serrapilheira
(\url{https://serrapilheira.org/en/}), for offering a postdoctoral position through the PGMAT-UFC.


\begin{thebibliography}{99}
\bibitem{Bach}  R. Bacher,  Determinants of matrices related to the Pascal triangle, {\em J. Th\'eor. Nombres Bordeaux},  {\bf 14}(1)(2002), 19--41.

\bibitem{ES} A. Edelman and G. Strong, Pascal matrices, {\em Amer. Math. Monthly}, {\bf 111}(3)(2004), 189--197.

\bibitem{K2} C. Krattenthaler, Evaluations of some determinants of matrices related to the Pascal triangle, {\em S\'em. Lothar. Combin.},  {\bf 47}(2001/02), Art. B47g, 19 pp.

\bibitem{M} A. R. Moghaddamfar, Determinants of several matrices associated with Pascal's triangle, {\em Asian-Eur. J. Math.}, {\bf 3}(1)(2010), 119--131.

\bibitem{MP} A. R. Moghaddamfar and S. M. H. Pooya,  Generalized Pascal triangles and Toeplitz matrices, {\em  Electron. J. Linear Algebra}, {\bf 18}(2009), 564--588.

\bibitem{MSS1} A. R. Moghaddamfar, S. Navid Salehy and S. Nima Salehy,  The determinants of matrices with recursive entries, {\em Linear Algebra Appl.},  428(11-12) (2008), 2468--2481.

\bibitem{T} M. Tan, Matrices associated to biindexed linear recurrence relations, {\em  Ars Combin.}, {\bf 86}(2008), 305--319.

\bibitem{JZ}  J. Zhong, Structural and sparsity properties of symmetric $7$-matrices,  {\em Linear Algebra Appl.}, {\bf 436}(2012),  2425--2439.

\end{thebibliography}
\end{document}